\newtheorem{definition}{Definition}
\newtheorem{theorem}{Theorem}
\newtheorem{lemma}[theorem]{Lemma}
\newtheorem{corollary}[theorem]{Corollary}
\newtheorem{proposition}[theorem]{Proposition}
\theoremstyle{remark}
\newtheorem*{remark}{Remark}
\newcommand{\prop}{\textup{prop}}
\title{A note on the minimal tensor product and the C*-envelope of operator systems}
\author{Ian Koot}
\date{February 2022}
\begin{document}

\maketitle

\begin{abstract}
\noindent In this article, we show that the $C^*$-envelope of the minimal tensor product of two operator systems is isomorphic to the minimal tensor product of their $C^*$-envelopes. We do this by identifying the \v Silov boundary ideal of the minimal tensor product of two operator systems. Finally, as an application of this result, we show that the propagation number of the minimal tensor product of operator systems is the maximum of the propagation numbers of the factors.
\end{abstract}

\section{Introduction}
It is a well-known result that every unital $C^*$-algebra is isomorphic to a unital, self-adjoint and norm-closed subalgebra of the space of bounded operators on some Hilbert space. Generalizing this, there is a structure called an \textit{operator system}, which are isomorphic to unital, closed and self-adjoint \textit{subspaces} of the bounded operators on some Hilbert space. These operator systems can be abstractly characterized as vector spaces with extra structure in the form of an involution, and a notion of when a matrix with entries in the operator system is positive.

Any unital, closed and self-adjoint subspace of a $C^*$-algebra is an operator system. Conversely, because the bounded operators on a Hilbert space form a $C^*$-algebra, every operator system is isomorphic to a unital, closed and self-adjoint subspace of a $C^*$-algebra. For any given operator system, we can therefore ask the question: in which $C^*$-algebras can we identify a subspace that is isomorphic to the given operator system? It turns out that all such $C^*$-algebras can be projected onto a specific $C^*$-algebra with the same property.

Specifically, for an operator system $E$ we can consider all maps $i_A: E \hookrightarrow A$, where $A$ is a $C^*$-algebra, such that $i_A$ is an isomorphism onto its image, and without loss of generality we can assume $A$ to be the smallest $C^*$-subalgebra containing $E$. We call such a map a \textit{$C^*$-extension}. There is a specific $C^*$-extension, called the \textit{$C^*$-envelope} of $E$, which is in some sense the `smallest' $C^*$-extension. What's more, one can prove that any $C^*$-algebra $A$ that arises in a $C^*$-extension contains an ideal $I$ called the \textit{\v Silov boundary ideal}, such that $A/I$ is isomorphic to the $C^*$-envelope.

There are multiple characterizations of the \v Silov boundary ideal. In this article these different characterizations are used to clarify the relation between the $C^*$-envelope and the minimal tensor product of operator systems and $C^*$-algebras. Specifically, we do this by proving the following theorem, which is the main result of this article:
\begin{theorem}
\label{thm:Main}
Let $E \subseteq B(H)$ and $F \subseteq B(K)$ be operator systems, and let $I \subseteq C^*(E)$ and $J \subseteq C^*(F)$ be their respective \v Silov boundary ideals. Then $\ker q_I \otimes_{\min} q_J$ is the \v Silov boundary ideal for $E \otimes_{\min} F \subseteq B(H \overline{\otimes} K)$. Consequently, this means that
\[ C^*_{env}(E \otimes_{\min} F) \cong C^*_{env}(E) \otimes_{\min} C^*_{env}(F) \]
\end{theorem}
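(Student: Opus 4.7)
The plan is to verify directly that the ideal $\ker(q_I \otimes_{\min} q_J) \subseteq C^*(E) \otimes_{\min} C^*(F)$ is a boundary ideal for the embedding $E \otimes_{\min} F \hookrightarrow C^*(E) \otimes_{\min} C^*(F)$, and that it is maximal among such ideals; the stated $C^*$-envelope isomorphism then follows from the first isomorphism theorem. As a preliminary observation, inside $B(H \overline{\otimes} K)$ we have $C^*(E \otimes_{\min} F) = C^*(E) \otimes_{\min} C^*(F)$, since both algebras are generated by the elementary tensors $e \otimes f$. Since by definition of $I$ and $J$ the maps $q_I|_E$ and $q_J|_F$ are complete order embeddings, functoriality of $\otimes_{\min}$ under complete isometries shows that $(q_I \otimes_{\min} q_J)|_{E \otimes_{\min} F}$ is a complete order embedding into $C^*_{env}(E) \otimes_{\min} C^*_{env}(F)$, so $\ker(q_I \otimes_{\min} q_J)$ is indeed a boundary ideal.

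For maximality, passing to the quotient lets me reduce to the case $I = J = 0$: assume $C^*(E) = C^*_{env}(E)$ and $C^*(F) = C^*_{env}(F)$, and show that any boundary ideal $L$ of $E \otimes_{\min} F$ in $C^*_{env}(E) \otimes_{\min} C^*_{env}(F)$ is zero. I would invoke Arveson's boundary representations: there are enough boundary representations $\pi$ of $E$ and $\rho$ of $F$ that their kernels intersect trivially on $C^*_{env}(E)$ and $C^*_{env}(F)$ respectively.

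The key step is to show that $(\pi \otimes \rho)|_{E \otimes_{\min} F}$ itself inherits the unique extension property (UEP) from its two factors. For any ucp extension $\Phi: C^*_{env}(E) \otimes_{\min} C^*_{env}(F) \to B(H_\pi \otimes H_\rho)$ and any state $\omega$ on $B(H_\rho)$, the slice map $(\mathrm{id} \otimes \omega) \circ \Phi$ is ucp on $C^*_{env}(E) \otimes 1$ and agrees with $\pi|_E$ on $E$, hence coincides with $\pi$ by UEP of $\pi|_E$. Letting $\omega$ range over all states of $B(H_\rho)$ and using that states separate points, one concludes $\Phi(a \otimes 1) = \pi(a) \otimes I_{H_\rho}$ for all $a \in C^*_{env}(E)$, and symmetrically $\Phi(1 \otimes b) = I_{H_\pi} \otimes \rho(b)$. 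A standard multiplicative-domain argument then forces $\Phi = \pi \otimes \rho$. With UEP in hand, applying Arveson extension to the completely isometric embedding $E \otimes_{\min} F \hookrightarrow (C^*_{env}(E) \otimes_{\min} C^*_{env}(F))/L$ followed by $\pi \otimes \rho$ yields a ucp lift back to $C^*_{env}(E) \otimes_{\min} C^*_{env}(F)$ that must equal $\pi \otimes \rho$, forcing $L \subseteq \ker(\pi \otimes \rho)$. Since minimal tensor products of faithful families of representations remain faithful, $\bigcap_{\pi, \rho} \ker(\pi \otimes \rho) = 0$ in $C^*_{env}(E) \otimes_{\min} C^*_{env}(F)$, and therefore $L = 0$.

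The principal obstacle I anticipate is the UEP-propagation step: bootstrapping the unique extension property from each factor $\pi|_E$, $\rho|_F$ to the tensor product $(\pi \otimes \rho)|_{E \otimes_{\min} F}$. The slice-map trick dispatches this cleanly once one knows enough boundary representations exist, which is Arveson's boundary theorem (in the non-separable setting requiring a more recent extension).
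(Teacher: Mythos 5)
Your overall strategy coincides with the paper's: show that $\ker(q_I\otimes_{\min}q_J)$ is a boundary ideal using functoriality of $\otimes_{\min}$ under complete isometries (the paper's Lemmas \ref{lem:completely_isometric_inclusion} and \ref{lem:tensor_of_isometric_maps_is_isometric} and Corollary \ref{cor:BoundaryIdeal}), and show maximality by proving that tensor products of boundary representations retain the unique extension property and that the relevant intersection of kernels is exactly $\ker(q_I\otimes_{\min}q_J)$ (the paper's Lemmas \ref{lem:Hopenwasser} and \ref{lem:lazar} and Corollary \ref{cor:contains_Silov_ideal}). Two of your sub-arguments are genuinely different in execution: your slice-map and multiplicative-domain proof that $\pi\otimes\rho$ inherits the unique extension property is essentially Hopenwasser's argument, which the paper cites rather than reproves; and your replacement of the paper's Lemma \ref{lem:lazar} (proved there via the Kirchberg Slice Lemma) by the observation that $\bigoplus_{\pi,\rho}(\pi\otimes\rho)\cong\bigl(\bigoplus_\pi\pi\bigr)\otimes_{\min}\bigl(\bigoplus_\rho\rho\bigr)$ is faithful when the factors are is a more elementary alternative, provided it is applied correctly (see below).

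The one step that does not hold up as written is the reduction ``passing to the quotient lets me reduce to the case $I=J=0$.'' A boundary ideal $L$ for $E\otimes_{\min}F$ lives in $C^*(E)\otimes_{\min}C^*(F)$; to transfer it to $C^*_{env}(E)\otimes_{\min}C^*_{env}(F)$ you would need $(q_I\otimes_{\min}q_J)(L)$ to be a boundary ideal there, i.e.\ that the quotient by $L+\ker(q_I\otimes_{\min}q_J)$ is still completely isometric on $E\otimes_{\min}F$. That a sum of two boundary ideals is again a boundary ideal is precisely the nontrivial content behind the existence of the \v Silov ideal, so it cannot be taken for granted. Fortunately the reduction is unnecessary: run your Arveson-extension argument directly in $C^*(E)\otimes_{\min}C^*(F)$ to obtain $L\subseteq\bigcap_{\pi,\rho}\ker(\pi\otimes_{\min}\rho)$ for $\pi\in\partial E$ and $\rho\in\partial F$; then, since $\bigcap_\pi\ker\pi=I$ by Davidson--Kennedy, the representation $\bigoplus_\pi\pi$ factors as an injective $*$-homomorphism composed with $q_I$ (likewise for $F$), and injectivity of $\otimes_{\min}$ gives $\bigcap_{\pi,\rho}\ker(\pi\otimes_{\min}\rho)=\ker\bigl(\bigl(\bigoplus_\pi\pi\bigr)\otimes_{\min}\bigl(\bigoplus_\rho\rho\bigr)\bigr)=\ker(q_I\otimes_{\min}q_J)$. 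With that repair the argument is complete and matches the paper's conclusion.
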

\noindent To prove this, we first formally introduces the relevant concepts such as the $C^*$-envelope and the \v Silov ideal in
Section \ref{sec:preliminaries}. The proof of Theorem \ref{thm:Main} is located in Section \ref{sec:proof}, where we identify the \v Silov ideal of the tensor product $E \otimes_{\min} F$ of two operator systems $E$ and $F$, which enables us to prove the isomorphism.

Finally, in Section \ref{sec:propagationNumber} we apply this identity to a property called the \textit{propagation number} of an operator system, introduced by Connes and Van Suijlekom in \cite{ConnesVSuijlekom2020}. This is a number associated to an operator system which, roughly speaking, describes how close it is to being a $C^*$-algebra; it turns out to behave particularly nice with respect to the tensor product, as we will see. 

\section{Preliminaries}
\label{sec:preliminaries}

We use the following definition of operator systems:
\begin{definition}
An \textbf{operator system} $E$ is a unital linear subspace $E \subseteq B(H)$ that is closed in the norm-topology and for which $e^* \in E$ for all $e \in E$.
\end{definition}
For the definitions and discussions of related concepts such as operator spaces, completely bounded maps, completely isometric maps and completely positive maps, we refer for example to  \cite{PaulsenCompletelyBoundedMaps}. 

Some remarks on notation: for a $C^*$-algebra $A$ with (closed, two-sided) ideal $I \subseteq A$ we will denote the canonical quotient map $A \longrightarrow A/I$ by $q_I$. For a linear map $\phi: E \longrightarrow F$, we write the induced map on matrix spaces as
\[ \phi^{(n)}: M_n(E) \longrightarrow M_n(F), \,\,\, (e_{ij}) \mapsto (\phi(e_{ij})). \]
Also, for subsets of vector spaces $S \subseteq V$ and $T \subseteq W$ we let
\[ S \otimes T := \text{span} \{ s \otimes t \mid s \in S, t \in T\} \subseteq V \otimes W. \]
For linear maps $\phi: V_1 \longrightarrow V_2$ and $\psi: W_1 \longrightarrow W_2$ we let $\phi \otimes \psi: V_1 \otimes V_2 \longrightarrow W_1 \otimes W_2$ be the map defined by $\phi\otimes \psi (v \otimes w) = \phi(v)\otimes \psi(w)$ on elementary tensors. If $V_1, V_2, W_1$ and $W_2$ are more specifically operator spaces, operator systems or $C^*$-algebras, and $\phi$ and $\psi$ bounded, we will denote by $\phi \otimes_{\min} \psi$ the map obtained from $\phi \otimes \psi$ through continuous extension to the minimal tensor product (see Section \ref{sec:minimalTensorProduct}).

\subsection{The $C^*$-envelope and \v Silov ideal}
We start with the following definition due to Hamana \cite{Hamana1979}:
\begin{definition}
A \textbf{$C^*$-extension} of an operator system $E$ is a pair $(i,A)$ of a $C^*$-algebra $A$  and a linear map $i: E \longrightarrow A$ such that $C^*(i(E)) = A$, such that $i$ is a unital complete order isomorphism onto its image.
The \textbf{$C^*$-envelope} of an operator system $E$ is a $C^*$-extension $(i_E, C^*_{env}(E))$ with the following property:
\begin{itemize}
    \item for every $C^*$-extension $(i,A)$ there is a surjective *-homomorphism $\pi: A \longrightarrow C^*_{env}(E)$ such that
\[\begin{tikzcd}
                                               & A \arrow[d, "\pi", two heads] \\
E \arrow[ru, "i", hook] \arrow[r, "i_E"', hook] & C^*_{env}(E)               
\end{tikzcd}\]
commutes.
\end{itemize}
\end{definition}
Existence and uniqueness are proven by Hamana in \cite{Hamana1979}; note that there another definition is used, but by Corollary 4.2 in \cite{Hamana1979} and uniqueness the two definitions are equivalent. In particular we see that for any concrete operator system $E \subseteq B(H)$ we can construct the $C^*$-extension given by $C^*(E) \subseteq B(H)$, from which it follows that $C^*_{env}(E) \cong C^*(E)/\text{ker }\pi$. Hamana showed that the ideal $\ker \pi$ is in fact the so-called \textit{\v Silov boundary ideal}, as defined by Arveson in \cite{Arveson1969}. 

\begin{definition}
Let $E \subseteq B(H)$ be an operator system. A closed ideal $I \subseteq C^*(E)$ is a \textbf{boundary ideal} if the quotient map $C^*(E) \longrightarrow C^*(E)/I$ is completely isometric when restricted to $E$. The boundary ideal that is maximal with respect to inclusion is called the \textbf{\v Silov boundary ideal} (or simply \textbf{\v Silov ideal}).
\end{definition}

There is a more concrete description of the \v Silov boundary ideal, as first described by Arveson in \cite{Arveson1969}. It makes use of the following concept:

\begin{definition}
Let $E \subseteq B(H)$ be an operator system, and let $\sigma: C^*(E) \longrightarrow B(K)$ be an irreducible representation of $C^*(E)$. Then $\sigma$ is a \textbf{boundary representation} if for any unital completely positive map $\hat{\sigma}:C^*(E) \longrightarrow B(K)$ such that $\hat{\sigma}|_E = \sigma|_E$, we must necessarily have $\hat{\sigma} = \sigma$. We will denote the set of all boundary representations of $E$ by $\partial E$.
\end{definition}

Note that $\partial E$ really depends on the specific embedding $E \subseteq B(H)$ we choose, and not just on the abstract operator system structure.

\begin{proposition}
\label{prop:DavidsonKennedy}
Let $E \subseteq B(H)$ be an operator system and let $I \subseteq C^*(E)$ be the \v Silov ideal. Then
\[ I = \bigcap \{ \ker \sigma \mid \sigma \in \partial E \} \]
\end{proposition}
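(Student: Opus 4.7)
The plan is to prove the identity by two inclusions. Set $J := \bigcap \{\ker \sigma \mid \sigma \in \partial E\}$. One direction, $I \subseteq J$, will follow from a direct argument using Arveson's extension theorem and the defining property of boundary representations; the other direction, $J \subseteq I$, amounts to showing that $J$ is itself a boundary ideal, and this is where the essential content lies.

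For $I \subseteq J$, I would fix a boundary representation $\sigma: C^*(E) \to B(K)$ and show that \emph{every} boundary ideal $L$ is contained in $\ker \sigma$. Since $q_L$ is completely isometric on $E$, the composition $\sigma|_E \circ (q_L|_E)^{-1}$ is a unital completely positive map from the operator system $q_L(E) \subseteq C^*(E)/L$ to $B(K)$. Arveson's extension theorem produces a unital completely positive extension $\tilde\sigma: C^*(E)/L \to B(K)$. Then $\tilde\sigma \circ q_L$ is a UCP map on $C^*(E)$ whose restriction to $E$ coincides with $\sigma|_E$, so the defining property of the boundary representation forces $\tilde\sigma \circ q_L = \sigma$. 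In particular, $L \subseteq \ker(\tilde\sigma \circ q_L) = \ker \sigma$, and since $\sigma \in \partial E$ was arbitrary, $L \subseteq J$. Taking $L = I$ gives the inclusion.

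For the reverse inclusion it suffices to show that $J$ is a boundary ideal, for then $J \subseteq I$ by maximality of the \v Silov ideal. Equivalently, I must show that $q_J$ is completely isometric on $E$. Observe that the map
\[ C^*(E)/J \longrightarrow \prod_{\sigma \in \partial E} B(K_\sigma), \quad a + J \mapsto (\sigma(a))_{\sigma \in \partial E} \]
is a well-defined injective *-homomorphism, hence completely isometric. Composing with the inclusion $E \hookrightarrow C^*(E)$, the problem reduces to showing that for every $n$ and every $x \in M_n(E)$,
\[ \|x\| = \sup_{\sigma \in \partial E} \|\sigma^{(n)}(x)\|. \]

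This last reduction is the main obstacle, as it requires that the family $\partial E$ be large enough to recover all matrix norms on $E$. The standard route is first to invoke the extremal characterization of the matrix norm in terms of unital completely positive maps, together with a Krein–Milman argument to produce a pure UCP map $\varphi: E \to B(K)$ attaining $\|\varphi^{(n)}(x)\|$ arbitrarily close to $\|x\|$, and then to appeal to the deep theorem of Davidson–Kennedy (completing the program initiated by Arveson) that every pure UCP map on $E$ dilates to the restriction of a boundary representation of $C^*(E)$. With this ingredient in hand, the supremum above is attained up to $\varepsilon$, so $\bigoplus_\sigma \sigma|_E$ is completely isometric, $J$ is a boundary ideal, and the proof concludes.
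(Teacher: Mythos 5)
Your argument is correct, but note that the paper does not actually prove this proposition: it is quoted from the literature, with the general case attributed to Davidson and Kennedy. Your write-up is essentially the standard derivation of the ideal-theoretic statement from their existence theorem, and you have correctly located where all the difficulty sits. The easy inclusion $I \subseteq J$ is Arveson's original argument: the only point worth making explicit is that $(q_L|_E)^{-1}$ is unital completely positive because a unital complete isometry between operator systems is automatically a complete order isomorphism. For the reverse inclusion, your reduction to the claim that $\bigoplus_{\sigma \in \partial E} \sigma$ is completely isometric on $E$ is exactly right, and that claim (every operator system is completely normed by its boundary representations, via dilation of pure UCP maps to boundary representations) is precisely the main theorem of Davidson--Kennedy that the paper cites in lieu of a proof. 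So your proposal does not fill in more content than the paper's citation does at the crucial step --- nor should it, since that step is a deep theorem --- but it does correctly supply the routine bookkeeping connecting that theorem to the stated identity between the \v Silov ideal and $\bigcap \{\ker \sigma \mid \sigma \in \partial E\}$.
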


Proposition \ref{prop:DavidsonKennedy} had been proven in specific cases (see for example \cite{Arveson2008}), but was proven for the general case by Davidson and Kennedy in \cite{DavidsonKennedy2015}.

\subsection{The minimal tensor product}
\label{sec:minimalTensorProduct}
Finally, we introduce the minimal tensor product of operator systems. Given two Hilbert spaces $H$ and $K$, we can form the algebraic tensor product vector space $H \otimes K$. If we supply this vector space with the inner product given by
\[  \langle x \otimes y, v\otimes w \rangle_{H \otimes K} := \langle x,v\rangle_H \langle y,w \rangle_K\]
and complete the space with respect to the induced norm, we have created a new Hilbert space, which we will denote as $H \overline{\otimes} K$. If we now have $e \in B(H)$ and $f \in B(K)$, we can define an operator $e \otimes f \in B(H \overline{\otimes} K)$ through
\[ e\otimes f : \sum_i x_i\otimes y_i \mapsto \sum_i e(x_i)\otimes f(y_i) \]
and extending by continuity. Given two operator spaces $E \subseteq B(H)$ and $F \subseteq B(K)$, we can then define the linear subspace
\[ E \otimes_{\min} F := \overline{\text{span}} \{ e \otimes f \mid e \in E, f \in F\} \subseteq B(H \overline{\otimes} K), \]
where the closure is taken in the operator norm. If $E$ and $F$ are operator \textit{systems}, it is also easily seen to be self-adjoint and unital by the fact that $E$ and $F$ are self-adjoint and unital. So then $E \otimes_{\min} F$ is an operator system. In fact, if $A$ and $B$ are $C^*$-algebras (so in particular operator systems), then $A \otimes_{\min} B$ as defined above is a $C^*$-algebra. The reason this construction is called the \textit{minimal} tensor product is that the norm induced by $B(H \overline{\otimes} K)$ on the \textit{algebraic} tensor product $A \otimes B$ is the smallest $C^*$-norm on that space.

\section{Identifying the \v Silov ideal in the tensor product}
\label{sec:proof}
In this section, let $E \subseteq B(H)$ and $F \subseteq B(K)$ be operator systems, and let $I \subseteq C^*(E)$ and $J \subseteq C^*(F)$ be the \v Silov boundary ideals for $E$ and $F$, respectively.

Note that $C^*(E \otimes_{\min} F) = C^*(E) \otimes_{\min} C^*(F)$:  on the one hand $C^*(E) \otimes_{\min} C^*(F)$ is a $C^*$-algebra containing $E \otimes_{\min} F$, so the left-to-right inclusion holds; on the other hand, we can form the set
\[ \{ x \in B(H) \mid x \otimes 1 \in C^*(E \otimes_{\min} F) \} \]
which is clearly a $C^*$-algebra containing $E$, so it must contain $C^*(E)$, and we therefore have $C^*(E) \otimes 1 \subseteq  C^*(E \otimes_{\min} F)$, from which, after a similar argument for $F$, the right-to-left inclusion follows. In the next two subsections, we will show that the ideal
\[ \ker q_I \otimes_{\min} q_J \subseteq C^*(E \otimes_{\min} F)\]
is in fact the \v Silov boundary ideal for $E \otimes_{\min} F$. For this we first show that it is a boundary ideal, and after that we show that the \v Silov ideal is contained in $\ker q_I \otimes_{\min} q_J$, making use of the characterization through boundary representations. Since the \v Silov ideal is the maximal boundary ideal, we have that any boundary ideal that contains it must be the \v Silov ideal itself.

\subsection{Boundary ideal}
In order to prove that the qoutient by $\ker q_I \otimes_{\min} q_J$ is completely isometric on $E \otimes_{\min} F$, we prove that $q_I \otimes_{\min} q_J$ is completely isometric on $E \otimes_{\min} F$. This is sufficient because of the following lemma:

\begin{lemma}
\label{lem:equivalent_characterization}
Let $\pi: A \longrightarrow B$ be a *-homomorphism between unital $C^*$-algebras, and let $E \subseteq A$ be an operator sytem. Then $\pi|_{E}$ is completely isometric if and only if $q_{\ker \pi} |_E$ is completely isometric.
\end{lemma}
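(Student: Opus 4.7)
The plan is to factor $\pi$ through the quotient by its kernel and invoke the standard fact that an injective *-homomorphism between $C^*$-algebras is completely isometric.

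First, I would recall that $\pi$ admits the canonical factorization
\[ \pi = \tilde \pi \circ q_{\ker \pi}, \]
where $\tilde \pi: A/\ker\pi \longrightarrow B$ is the induced *-homomorphism, which is injective by construction. Restricting to $E$ we obtain $\pi|_E = \tilde\pi \circ q_{\ker\pi}|_E$, and at the level of matrix amplifications $\pi^{(n)}|_{M_n(E)} = \tilde\pi^{(n)} \circ q_{\ker\pi}^{(n)}|_{M_n(E)}$ for every $n$.

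Next, I would invoke the standard $C^*$-algebraic result that an injective *-homomorphism between $C^*$-algebras is isometric, and, applied on each matrix level, completely isometric. Thus $\tilde\pi$ is completely isometric, and in particular each $\tilde\pi^{(n)}$ preserves norms on $M_n(A/\ker\pi)$. Therefore, for any $x \in M_n(E)$,
\[ \|\pi^{(n)}(x)\| = \|\tilde\pi^{(n)}(q_{\ker\pi}^{(n)}(x))\| = \|q_{\ker\pi}^{(n)}(x)\|. \]
Consequently the matrix norms of $\pi|_E$ and $q_{\ker\pi}|_E$ agree on every amplification, so the former is completely isometric on $E$ if and only if the latter is, which is exactly the claim.

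The only potentially delicate point is the assertion that an injective *-homomorphism between $C^*$-algebras is completely isometric; but this is a standard consequence of the fact that such a map is isometric (since norms in a $C^*$-algebra are determined by the algebraic structure via the spectral radius of $x^*x$) applied to the amplifications $\tilde\pi^{(n)}: M_n(A/\ker\pi) \longrightarrow M_n(B)$, which are themselves injective *-homomorphisms of $C^*$-algebras. So the proof is essentially a short factorization argument with no substantial obstacle.
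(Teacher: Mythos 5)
Your proof is correct and follows essentially the same route as the paper: both factor $\pi$ as the quotient map $q_{\ker\pi}$ followed by the induced injective *-homomorphism on $A/\ker\pi$, and both rely on that induced map being completely isometric (the paper phrases this as the *-isomorphism onto $\pi(A)$ being completely isometric, which is the same fact). Your version is slightly more explicit about the matrix-level norm equalities, but there is no substantive difference.
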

\begin{proof}
Note that we have an *-isomorphism $\phi: A/\ker \pi \longrightarrow \pi(B)$, with the property that $\phi \circ q_{\ker \pi} = \pi$. Since *-homomorphisms, the restriction of a completely isometric map to an operator system and the composition of complete isometric maps are all completely isometric, the lemma follows.
\end{proof}

What remains is to prove that the tensor product of two completely isometric maps is again completely isometric. For this, we use the argumentation from \cite[Sec. 1.5.1]{blecherMerdyOperatorAlgebras}, where the proof of Lemma \ref{lem:tensor_of_isometric_maps_is_isometric} is given as a sketch. Given operator \textit{spaces} $E$ and $F$, we can give the space of completely bounded maps between them, which we call $CB(E,F)$, an (abstract) operator space structure through the identification
\[ M_n(CB(E,F)) \cong CB(E,M_n(F)). \]
We then define the \textbf{dual} operator space as $E^*:= CB(E, \mathbb{C})$. We can use this operator space to describe the minimal tensor product as
\[ E \otimes_{\min} F \cong CB(F^*,E). \]
A discussion of these identifications can for example be found in \cite{EffrosRuanOperatorSpaces}, \cite{PisierIntroToOperatorSpaceTheory} and \cite{PaulsenCompletelyBoundedMaps}.

\begin{lemma}
\label{lem:completely_isometric_inclusion}
Let $\phi: E_1 \longrightarrow E_2$ be a completely isometric map. Then its pushforward $\phi_*: CB(F,E_1) \longrightarrow CB(F,E_2)$ given by $f \mapsto \phi \circ f$ is completely isometric.
\end{lemma}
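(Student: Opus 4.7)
The plan is to use the operator space identification $M_n(CB(F,E)) \cong CB(F, M_n(E))$ to reduce the statement about complete isometry of $\phi_*$ at each matrix level to the fact that a single pushforward by an isometry preserves the completely bounded norm.

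First, I would verify naturality of the identification: under the isomorphism $M_n(CB(F, E_i)) \cong CB(F, M_n(E_i))$, an element $(f_{ij}) \in M_n(CB(F, E_1))$ corresponds to the map $f: F \to M_n(E_1)$, $x \mapsto (f_{ij}(x))$. Applying $\phi_*^{(n)}$ then produces $(\phi \circ f_{ij})$, which corresponds to $x \mapsto (\phi(f_{ij}(x))) = \phi^{(n)}(f(x))$. Thus, modulo the identification, $\phi_*^{(n)}$ is precisely the pushforward by $\phi^{(n)}: M_n(E_1) \to M_n(E_2)$.

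Next, I would observe that because $\phi$ is completely isometric, so is each amplification $\phi^{(n)}$ (the $k$-th amplification of $\phi^{(n)}$ is $\phi^{(nk)}$, which is isometric). It therefore suffices to prove the following: for any completely isometric map $\psi: A_1 \to A_2$ between operator spaces, the pushforward $\psi_*: CB(F, A_1) \to CB(F, A_2)$ is isometric in the cb-norm. This is a direct computation: for $f \in CB(F, A_1)$ and any $k$, the amplification $(\psi \circ f)^{(k)} = \psi^{(k)} \circ f^{(k)}$, and since $\psi^{(k)}$ is isometric, $\|\psi^{(k)} \circ f^{(k)}\| = \|f^{(k)}\|$; taking the supremum over $k$ yields $\|\psi \circ f\|_{cb} = \|f\|_{cb}$.

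Combining these two steps, $\phi_*^{(n)}$ is an isometry on $M_n(CB(F, E_1))$ for every $n$, which is exactly the statement that $\phi_*$ is completely isometric. I do not expect a serious obstacle here; the main thing to be careful about is making the identification $M_n(CB(F,E)) \cong CB(F, M_n(E))$ explicit enough that the commutation of $\phi_*^{(n)}$ with $(\phi^{(n)})_*$ is transparent, after which the argument is essentially a one-line supremum.
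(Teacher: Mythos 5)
Your proposal is correct and takes essentially the same approach as the paper: both reduce $\phi_*^{(n)}$ to $(\phi^{(n)})_*$ via the identification $M_n(CB(F,E)) \cong CB(F,M_n(E))$ and then check that pushforward by a complete isometry preserves the cb-norm using $(\psi\circ f)^{(k)} = \psi^{(k)}\circ f^{(k)}$. Your explicit verification of the naturality of the identification is a welcome addition that the paper leaves implicit in its commuting diagram.
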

\begin{proof}
Note that with the induced map 
\[ (\phi_*)^{(n)}: M_n(CB(F,E_1)) \longrightarrow M_n(CB(F,E_2)) \]
we can construct the commuting diagram
\[ \begin{tikzcd}
{M_n(CB(F,E_1))} \arrow[r, "(\phi_*)^{(n)}"] \arrow[d, "\sim"] & {M_n(CB(F,E_2))}                  \\
{CB(F,M_n(E_1))} \arrow[r, "(\phi^{(n)})_*"]                           & {CB(F,M_n(E_2))} \arrow[u, "\sim"]
\end{tikzcd} \]
We note that 
\[ \|(\phi^{(n)})_*(f)\|_{cb} = \|\phi^{(n)} \circ f \|_{cb} = \sup \{ \|(\phi^{(n)} \circ f)^{(m)}\|_m \mid 1 \leq m \} \]
and
\[ \|(\phi^{(n)} \circ f)^{(m)} \|_m = \sup \{ \|\phi^{(nm)} ( f^{(m)}(x))\|_m \mid x \in M_m(F), \|x\|_m \leq 1 \}. \]
Because $\phi$ is completely isometric, $\phi^{(nm)}$ is isometric, and so $\|(\phi^{(n)} \circ f)^{(m)}\|_m = \|f^{(m)}\|_m$. So $(\phi^{(n)})_*$ is isometric, and $(\phi_*)^{(n)}$ is then the composition of isometric maps, making it isometric. So $\phi_*$ is completely isometric.
\end{proof}

\begin{lemma}
\label{lem:tensor_of_isometric_maps_is_isometric}
If $\phi: E_1 \longrightarrow E_2$ and $\psi: F_1 \longrightarrow F_2$ are complete isometries between operator spaces, then $\phi\otimes_{\min} \psi: E_1 \otimes_{\min} F_1 \longrightarrow E_2 \otimes_{\min} F_2$ is a complete isometry. 
\end{lemma}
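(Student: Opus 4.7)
The plan is to factor
\[ \phi \otimes_{\min} \psi = (\mathrm{id}_{E_2} \otimes_{\min} \psi) \circ (\phi \otimes_{\min} \mathrm{id}_{F_1}) \]
as a composition of two maps, each of which tensors a complete isometry with an identity, and to show that each factor is itself a complete isometry by reducing to Lemma \ref{lem:completely_isometric_inclusion} via the identification $E \otimes_{\min} F \cong CB(F^*,E)$. Since a composition of completely isometric maps is completely isometric, this yields the desired conclusion.

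For the first factor $\phi \otimes_{\min} \mathrm{id}_{F_1}: E_1 \otimes_{\min} F_1 \longrightarrow E_2 \otimes_{\min} F_1$, I would apply the identification $E_i \otimes_{\min} F_1 \cong CB(F_1^*, E_i)$, under which an elementary tensor $e \otimes f$ corresponds to the map $g \mapsto g(f)\, e$. A short check on elementary tensors then shows that $\phi \otimes_{\min} \mathrm{id}_{F_1}$ corresponds, under these identifications, to the pushforward $\phi_*: CB(F_1^*, E_1) \longrightarrow CB(F_1^*, E_2)$. By Lemma \ref{lem:completely_isometric_inclusion}, $\phi_*$ is completely isometric, so $\phi \otimes_{\min} \mathrm{id}_{F_1}$ is as well.

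For the second factor $\mathrm{id}_{E_2} \otimes_{\min} \psi: E_2 \otimes_{\min} F_1 \longrightarrow E_2 \otimes_{\min} F_2$, the same strategy works after first invoking the natural complete isometry $E_2 \otimes_{\min} F_j \cong F_j \otimes_{\min} E_2$ (commutativity of the minimal tensor product) so that the slot containing $\psi$ is on the left. Combining this with $F_j \otimes_{\min} E_2 \cong CB(E_2^*, F_j)$, the map $\mathrm{id}_{E_2} \otimes_{\min} \psi$ becomes the pushforward $\psi_*: CB(E_2^*, F_1) \longrightarrow CB(E_2^*, F_2)$, which is completely isometric by Lemma \ref{lem:completely_isometric_inclusion} once again.

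The main obstacle, and really the only substantive step, is the compatibility check that the identifications $E \otimes_{\min} F \cong CB(F^*,E)$ genuinely intertwine $\phi \otimes_{\min} \mathrm{id}$ with $\phi_*$, and (after flipping) $\mathrm{id} \otimes_{\min} \psi$ with $\psi_*$. Both amount to unwinding the identification on elementary tensors $e \otimes f$ and extending by linearity and continuity; no deeper input is needed.
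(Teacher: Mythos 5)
Your proposal is correct and follows essentially the same route as the paper: the paper likewise factors the map as a composition of two ``complete isometry tensor identity'' maps and reduces each factor to Lemma \ref{lem:completely_isometric_inclusion} via the completely isometric embedding $E \otimes_{\min} F \hookrightarrow CB(F^*,E)$. The only cosmetic difference is that you handle the second factor by invoking commutativity of $\otimes_{\min}$, whereas the paper treats it by the same generic argument applied with the roles of the slots exchanged.
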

\begin{proof}
First, note that $\phi\otimes_{\min}\psi = \phi \otimes_{\min} I_{F_2} \circ I_{E_1}\otimes_{\min} \psi$, where $I_{E_1}$ and $I_{F_2}$ is the identity on $E_1$ and $F_2$, respectively. It therefore suffices to show that $\phi \otimes_{\min} I_{F_2}$ is a complete isometry. 

So let $\phi:E_1 \longrightarrow E_2$ be a complete isometry, and $F$ another operator space. Then because $\phi$ is completely isometric, by Lemma \ref{lem:completely_isometric_inclusion} it induces the completely isometric pushforward
\[ \phi_*: CB(F^*,E_1) \longrightarrow CB(F^*, E_2), f \mapsto \phi \circ f. \]
We therefore have the commuting diagram
\[
\begin{tikzcd}
{CB(F^*,E_1)} \arrow[r, "\phi_*"]                          & {CB(F^*,E_2)}                 \\
E_1\otimes_{\min} F \arrow[u, "i_1"] \arrow[r, "\phi\otimes I_F"] & E_2\otimes_{\min} F \arrow[u, "i_2"]
\end{tikzcd} \]
Since $\phi_*\circ i_1$ is completely isometric as the composition of completely isometric maps, we have that $i_2 \circ \phi\otimes I_F$ is also completely isometric. So $\phi\otimes I_F$ is completely isometric.
\end{proof}

\begin{corollary}
\label{cor:BoundaryIdeal}
Let $E \subseteq B(H)$ and $F \subseteq B(K)$ be operator systems, and $I$ and $J$ their respective \v Silov boundary ideals. Then $\ker q_I \otimes_{\min} q_J$ is a boundary ideal for $E \otimes_{\min} F \subseteq B(H \overline{\otimes} K)$.
\end{corollary}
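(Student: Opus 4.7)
The plan is to assemble the corollary directly from Lemmas \ref{lem:equivalent_characterization} and \ref{lem:tensor_of_isometric_maps_is_isometric}, which have done all the work. The basic idea is to realise $\ker(q_I \otimes_{\min} q_J)$ as the kernel of a $*$-homomorphism that is completely isometric on $E \otimes_{\min} F$, and then invoke Lemma \ref{lem:equivalent_characterization} to pass from complete isometry of the $*$-homomorphism to complete isometry of the quotient map, which is precisely what it means for the kernel to be a boundary ideal.

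First I would note that since $I$ and $J$ are the \v Silov (hence in particular boundary) ideals for $E$ and $F$, the quotient maps $q_I \colon C^*(E) \to C^*(E)/I$ and $q_J \colon C^*(F) \to C^*(F)/J$ restrict to complete isometries on $E$ and $F$ respectively. The $*$-homomorphism $q_I \otimes_{\min} q_J$ is then defined on $C^*(E) \otimes_{\min} C^*(F) = C^*(E \otimes_{\min} F)$ by continuous extension of the algebraic tensor product map, and by construction its restriction to the subspace $E \otimes_{\min} F$ agrees with the minimal tensor product $q_I|_E \otimes_{\min} q_J|_F$ of the restrictions.

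Next I would apply Lemma \ref{lem:tensor_of_isometric_maps_is_isometric} to the complete isometries $q_I|_E$ and $q_J|_F$, which gives that $q_I|_E \otimes_{\min} q_J|_F \colon E \otimes_{\min} F \to (C^*(E)/I) \otimes_{\min} (C^*(F)/J)$ is a complete isometry. Equivalently, $(q_I \otimes_{\min} q_J)|_{E \otimes_{\min} F}$ is completely isometric.

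Finally, I would apply Lemma \ref{lem:equivalent_characterization} with $\pi = q_I \otimes_{\min} q_J$ to conclude that $q_{\ker(q_I \otimes_{\min} q_J)}$ is also completely isometric on $E \otimes_{\min} F$. By definition this says precisely that $\ker(q_I \otimes_{\min} q_J)$ is a boundary ideal for $E \otimes_{\min} F$. There is no real obstacle here; the only thing one might briefly justify is the compatibility of the restriction $(q_I \otimes_{\min} q_J)|_{E \otimes_{\min} F}$ with the tensor product of the restrictions, but this is immediate from the construction of the minimal tensor product of bounded maps as the continuous extension of the algebraic tensor product.
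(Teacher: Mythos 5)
Your proposal is correct and follows essentially the same route as the paper: both reduce to showing $(q_I \otimes_{\min} q_J)|_{E \otimes_{\min} F} = q_I|_E \otimes_{\min} q_J|_F$ is completely isometric via Lemma \ref{lem:tensor_of_isometric_maps_is_isometric}, and then invoke Lemma \ref{lem:equivalent_characterization} to pass to the quotient by the kernel. No gaps.
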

\begin{proof}
By Lemma \ref{lem:equivalent_characterization} we need to prove that the map $q_I \otimes_{\min} q_J|_{E \otimes_{\min} F}  $ is completely isometric. Since $I$ and $J$ are by assumption boundary ideals, we have that $q_I|_E$ and $q_J|_F$ are completely isometric. So by Lemma \ref{lem:tensor_of_isometric_maps_is_isometric} we have that
\[ q_I \otimes_{\min} q_J|_{E \otimes_{\min} F} = q_I|_E \otimes_{\min} q_J|_F \]
is completely isometric.
\end{proof}

\subsection{Boundary representations}
We start this section with a crucial property of boundary representations. 
\begin{lemma}
\label{lem:Hopenwasser}
Let $E \subseteq B(H)$ and $F \subseteq B(K)$ be operator systems. If $\sigma_1 \in \partial E$ and $\sigma_2 \in \partial F$, then $\sigma_1 \otimes_{\min} \sigma_2 \in \partial (E \otimes_{\min} F)$.
\end{lemma}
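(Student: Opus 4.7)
The plan is to verify the two conditions defining a boundary representation for the $*$-representation $\pi := \sigma_1 \otimes_{\min} \sigma_2$ acting on $K_1 \overline{\otimes} K_2$: first, irreducibility of $\pi$, and second, uniqueness of UCP extensions of $\pi|_{E \otimes_{\min} F}$ to all of $C^*(E) \otimes_{\min} C^*(F)$. Irreducibility I would obtain from Tomita's commutation theorem: the commutant of the image of $\pi$ in $B(K_1 \overline{\otimes} K_2)$ equals the commutant of the von Neumann algebra generated by $\sigma_1(C^*(E)) \otimes \sigma_2(C^*(F))$, namely $\sigma_1(C^*(E))'' \overline{\otimes} \sigma_2(C^*(F))''$, which by Tomita is $\sigma_1(C^*(E))' \overline{\otimes} \sigma_2(C^*(F))' = \mathbb{C} I \overline{\otimes} \mathbb{C} I = \mathbb{C} I$, using that $\sigma_1$ and $\sigma_2$ are already irreducible.

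For the uniqueness part, let $\hat\sigma$ be such a UCP extension. The strategy is to first show that $\hat\sigma(a \otimes 1) = \sigma_1(a) \otimes I_{K_2}$ for every $a \in C^*(E)$, symmetrically that $\hat\sigma(1 \otimes b) = I_{K_1} \otimes \sigma_2(b)$ for every $b \in C^*(F)$, and then combine these via the multiplicative domain. For the first identity, for each unit vector $\xi \in K_2$ I would introduce the isometry $V_\xi : K_1 \to K_1 \overline{\otimes} K_2$, $v \mapsto v \otimes \xi$, and consider the UCP compression $\tau_\xi : C^*(E) \to B(K_1)$ given by $\tau_\xi(a) := V_\xi^* \hat\sigma(a \otimes 1) V_\xi$. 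For $a \in E$, the hypothesis gives $\hat\sigma(a \otimes 1) = \sigma_1(a) \otimes I_{K_2}$ and hence $\tau_\xi(a) = \sigma_1(a)$, so the boundary representation property of $\sigma_1$ forces $\tau_\xi = \sigma_1$ on all of $C^*(E)$. Unpacking this over all unit $\xi$, the sesquilinear form $(\xi, \eta) \mapsto \langle v \otimes \xi, [\hat\sigma(a \otimes 1) - \sigma_1(a) \otimes I_{K_2}](w \otimes \eta)\rangle$ vanishes on the diagonal, and polarization then forces it to vanish identically; since the vectors $w \otimes \eta$ are total in $K_1 \overline{\otimes} K_2$, this yields $\hat\sigma(a \otimes 1) = \sigma_1(a) \otimes I_{K_2}$ as operators.

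Once both one-sided identities are in hand, both $a \otimes 1$ and $1 \otimes b$ lie in the multiplicative domain of $\hat\sigma$, as can be checked directly from the $*$-homomorphism property of $\sigma_1$ and $\sigma_2$ (for instance $\hat\sigma((a\otimes 1)^*(a\otimes 1)) = \sigma_1(a^*a)\otimes I = \hat\sigma(a\otimes 1)^*\hat\sigma(a\otimes 1)$). Consequently $\hat\sigma(a \otimes b) = \hat\sigma(a \otimes 1)\hat\sigma(1 \otimes b) = \sigma_1(a) \otimes \sigma_2(b)$ on elementary tensors, and by linearity and continuity $\hat\sigma = \sigma_1 \otimes_{\min} \sigma_2$ on all of $C^*(E) \otimes_{\min} C^*(F)$. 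The step I expect to need the most care with is the passage from the compressed identity $\tau_\xi = \sigma_1$, which is all that the boundary hypothesis directly delivers on $K_1$, to the full operator identity $\hat\sigma(a \otimes 1) = \sigma_1(a) \otimes I_{K_2}$ on $K_1 \overline{\otimes} K_2$; the polarization-of-a-sesquilinear-form observation is the bridge, and one must run it uniformly in $\xi$ rather than pointwise.
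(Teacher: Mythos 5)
Your argument is correct and complete, and it supplies something the paper does not: the paper gives no proof of this lemma, delegating the unique-extension property to \cite[Lemma 3]{Hopenwasser1978} and irreducibility to \cite[Proposition IV.4.13]{TakesakiTheory}. What you have written is in substance a self-contained reconstruction of both ingredients. For the extension half, the chain of steps checks out: $\tau_\xi = V_\xi^*\hat{\sigma}(\,\cdot\,\otimes 1)V_\xi$ is unital (because $V_\xi$ is an isometry and $\hat{\sigma}(1\otimes 1)=I$) and completely positive, it agrees with $\sigma_1$ on $E$ because $a\otimes 1\in E\otimes_{\min}F$, so the boundary property of $\sigma_1$ gives $\tau_\xi=\sigma_1$ on all of $C^*(E)$; the form $(\xi,\eta)\mapsto\langle v\otimes\xi,\,T(w\otimes\eta)\rangle$ with $T=\hat{\sigma}(a\otimes 1)-\sigma_1(a)\otimes I$ is sesquilinear and vanishes on the diagonal for every fixed $v,w$, so complex polarization kills it and totality of elementary tensors gives $T=0$; and Choi's multiplicative-domain criterion applies since $\hat{\sigma}\bigl((a\otimes 1)^*(a\otimes 1)\bigr)=\hat{\sigma}(a\otimes 1)^*\hat{\sigma}(a\otimes 1)$ and likewise with the adjoints reversed, after which $\hat{\sigma}(a\otimes b)=\sigma_1(a)\otimes\sigma_2(b)$ follows and density finishes the job. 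This is essentially Hopenwasser's own route, so your proof and the cited one coincide in spirit; yours has the merit of being explicit about the polarization step, which is exactly where the care is needed. One minor remark on irreducibility: invoking the full Tomita commutation theorem is heavier machinery than required here. Since $\sigma_1$ is irreducible you have $\sigma_1(C^*(E))''=B(K_1)$, and the elementary identity $\bigl(B(K_1)\overline{\otimes}\mathbb{C}I\bigr)'=\mathbb{C}I\overline{\otimes}B(K_2)$ already forces any operator commuting with the image of $\sigma_1\otimes_{\min}\sigma_2$ to be of the form $I\otimes S$ with $S\in\sigma_2(C^*(F))'=\mathbb{C}I$; no general commutation theorem is needed.
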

The proof is a combination of \cite[Lemma 3]{Hopenwasser1978}, where it is proved that $\sigma_1\otimes_{\min} \sigma_2$ has a unique completely positive extension from $E \otimes_{\min} F$, and for example \cite[Proposition IV.4.13]{TakesakiTheory}, where it is proved that $\sigma_1 \otimes_{\min} \sigma_2$ is irreducible.

Next, we have the main lemma for this section. The proof is adapted from \cite[Lemma 2.2]{lazar2010}, with the primary difference being that the proof below references the Kirchberg Slice Lemma \cite[Lemma I.4.1.9]{RordamStormer} for the fact that an ideal in the minimal tensor product contains a simple tensor.

\begin{lemma}
\label{lem:lazar}
Let $A$ and $B$ be $C^*$-algebras, and let $\mathcal{K}$ and $\mathcal{L}$ be families of ideals in $A$ and $B$, respectively. Now define
\[ I = \bigcap_{K \in \mathcal{K}} K \,\,\, \textup{and} \,\,\, J = \bigcap_{L \in \mathcal{L}} L \]
Then
\[ \ker q_{I}\otimes_{\min} q_{J} = \bigcap\{ \ker q_{K} \otimes_{\min} q_{L} \mid (K, L) \in \mathcal{K} \times \mathcal{L} \} \]
\end{lemma}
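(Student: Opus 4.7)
The plan is to prove the two inclusions separately. The inclusion $\subseteq$ is immediate: for every $(K,L) \in \mathcal{K}\times\mathcal{L}$ we have $I \subseteq K$ and $J \subseteq L$, so there are induced *-homomorphisms $\bar q_K : A/I \to A/K$ and $\bar q_L : B/J \to B/L$ satisfying $q_K = \bar q_K \circ q_I$ and $q_L = \bar q_L \circ q_J$. Tensoring gives the factorization $q_K \otimes_{\min} q_L = (\bar q_K \otimes_{\min} \bar q_L) \circ (q_I \otimes_{\min} q_J)$, hence $\ker(q_I \otimes_{\min} q_J) \subseteq \ker(q_K \otimes_{\min} q_L)$ for every $(K,L)$.

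For the reverse inclusion, set $M := \bigcap_{(K,L)} \ker(q_K \otimes_{\min} q_L)$, which is a closed two-sided ideal in $A \otimes_{\min} B$. Since $q_I \otimes_{\min} q_J$ is a surjective *-homomorphism onto $(A/I) \otimes_{\min} (B/J)$, the image $M' := (q_I \otimes_{\min} q_J)(M)$ is a closed two-sided ideal in $(A/I) \otimes_{\min} (B/J)$. Showing $M \subseteq \ker(q_I \otimes_{\min} q_J)$ is equivalent to showing $M' = \{0\}$, which I will establish by contradiction.

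Suppose $M' \neq \{0\}$. The Kirchberg Slice Lemma then supplies non-zero elements $a' \in A/I$ and $b' \in B/J$ with $a' \otimes b' \in M'$. Lift these to $a \in A$, $b \in B$; since $a \notin I = \bigcap_{K \in \mathcal{K}} K$, some $K_0 \in \mathcal{K}$ has $q_{K_0}(a) \neq 0$, and similarly some $L_0 \in \mathcal{L}$ has $q_{L_0}(b) \neq 0$. Picking any $m \in M$ with $(q_I \otimes_{\min} q_J)(m) = a' \otimes b'$ and using the factorization from the first step,
\[ (q_{K_0}\otimes_{\min} q_{L_0})(m) = (\bar q_{K_0}\otimes_{\min} \bar q_{L_0})(a'\otimes b') = q_{K_0}(a) \otimes q_{L_0}(b), \]
which is non-zero because $\otimes_{\min}$ is a cross-norm on the algebraic tensor product. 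This contradicts $m \in M \subseteq \ker(q_{K_0}\otimes_{\min} q_{L_0})$, so $M' = \{0\}$ as required.

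The main obstacle is the reverse inclusion, and the crux is the Kirchberg Slice Lemma: nothing in the minimal tensor product structure forces an arbitrary ideal to contain elementary tensors, but once such a decomposable witness $a' \otimes b'$ is available, its two legs can be tested separately against the families $\mathcal{K}$ and $\mathcal{L}$ to reach the contradiction.
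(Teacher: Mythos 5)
Your proof is correct and follows essentially the same route as the paper: the easy inclusion via the factorization $q_K\otimes_{\min}q_L=(\bar q_K\otimes_{\min}\bar q_L)\circ(q_I\otimes_{\min}q_J)$, and the hard inclusion by combining the Kirchberg Slice Lemma with the observation that a nonzero elementary tensor in $(A/I)\otimes_{\min}(B/J)$ is detected by some pair $(K_0,L_0)$. The only difference is organizational: you apply the Slice Lemma directly to the image ideal $(q_I\otimes_{\min}q_J)(M)$ and derive a contradiction, whereas the paper packages the same detection argument as a $C^*$-seminorm $N$ whose kernel is shown to be trivial, then invokes uniqueness of the $C^*$-norm; your version sidesteps that last step.
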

\begin{proof}
For readability, we will write $\otimes_{\min}$ simply as $\otimes$, and we will write $M = \bigcap\{ \ker q_{K} \otimes q_{L} \mid (K , L) \in \mathcal{K} \times \mathcal{L} \} $.
\begin{itemize}

    \item First, we prove the inclusion $ \ker q_{I}\otimes q_{J} \subseteq M $.
For all $K \in \mathcal{K}$ and $L \in \mathcal{L}$, we have that $I \subseteq K$ and $J \subseteq L$. So we can apply the isomorphisms $\rho: (A/I)/(K/I) \xrightarrow{\sim} A/K$ and $\sigma: (B/J)/(L/J) \xrightarrow{\sim} B/L$ to see that
\[ q_{K}\otimes q_{L} = (\rho \otimes \sigma)(q_{K/I} \otimes q_{L/J})(q_{I}\otimes q_{J})\]
so $\ker q_{I} \otimes q_{J} \subseteq \ker q_{K}\otimes q_{L}$, proving the inclusion.

    \item Second, for the inclusion
$ \ker q_{I}\otimes q_{J} \supseteq M, $ we define the seminorm
    \[ N(x) = \sup \{ \| q_{K/I}\otimes q_{L/J}(x) \| \mid  (K,L)\in \mathcal{K} \times \mathcal{L} \}. \]
    on $A/I\otimes B/J$. It is easily seen that $N(xy) \leq N(x)N(y)$ and $N(x^*x) = N(x)^2$. Note that $N(x) = 0$ if and only if $x \in \ker q_{K/I}\otimes q_{L/J}$ for all $(K,L) \in \mathcal{K} \times \mathcal{L}$. In particular we have that $\ker N$ is the intersection of ideals, and so it is an ideal itself.
    
    Note that if $[x]\otimes[y] \in A/I \otimes B/J$ is nonzero, then there are $K$ and $L$ such that $x \not\in K$ and $y \not\in L$. But 
    \[ 0 \neq q_{K}\otimes q_{L}(x\otimes y) = (\rho \otimes\sigma )(q_{K/I}\otimes q_{L/J})(q_{I}\otimes q_{J})(x\otimes y) \] \[= (\rho \otimes\sigma)(q_{K/I} \otimes q_{L/J})([x]\otimes [y])\]
    so
    \[ N([x]\otimes[y]) \geq \|(q_{K/I}\otimes q_{L/J})([x]\otimes[y])\| = \|q_{K} \otimes q_{L} (x \otimes y) \| > 0. \]
    Specifically, $\ker N$ does not contain any simple tensors. But by the Kirchberg Slice Lemma \cite[Lemma I.4.1.9]{RordamStormer} every nontrivial ideal contains a nontrivial simple tensor. So this means that $\ker N$ is trivial, and so $N$ is actually a $C^*$-norm. But those are unique, so $N(x) = \|x\|$ on $A/I \otimes B/J$.
    
    Finally, as we remarked above, we have
    \[  q_{K}\otimes q_{L}(x) = (\rho \otimes \sigma)(q_{K/I} \otimes q_{L/J})(q_{I}\otimes q_{J}) (x). \]
    So if $x \in M$, then the left-hand side is zero for all $(K,L) \in \mathcal{K} \times \mathcal{L}$, and $(q_{I}\otimes q_{J})(x) \in \ker q_{K/I} \otimes q_{L/J}$ for all $(K,L) \in \mathcal{K} \times \mathcal{L}$ because $\rho \otimes \sigma$ is injective. We therefore have that 
    \[ \| q_{I}\otimes q_{J}(x) \| = N(q_{I}\otimes q_{J}(x)) = 0, \] so $x \in \ker q_{I}\otimes q_{J}$, so that indeed $M \subseteq \ker q_I\otimes q_J$.
\end{itemize}
In conclusion, we have that $M = \ker q_I \otimes q_J$, proving the lemma.
\end{proof}

\begin{corollary}
\label{cor:contains_Silov_ideal}
Let $E \subseteq B(H)$ and $F \subseteq B(K)$ be operator systems, and let $I$ and $J$ be their respective \v Silov boundary ideals. Then
\[ \ker q_I \otimes_{\min} q_J = \bigcap \{ \ker \sigma_1 \otimes_{\min} \sigma_2 \mid (\sigma_1, \sigma_2) \in \partial E \times \partial F \} \]
Specifically, this means that $\ker q_I \otimes_{min} q_J$ contains the \v Silov boundary ideal of $E \otimes_{\min} F$.
\end{corollary}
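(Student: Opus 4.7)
The plan is to read off the identity by combining Proposition \ref{prop:DavidsonKennedy} with Lemma \ref{lem:lazar}, and then read off the containment of the \v Silov ideal from Lemma \ref{lem:Hopenwasser}. Concretely, I would first apply Proposition \ref{prop:DavidsonKennedy} to write $I = \bigcap_{\sigma_1 \in \partial E} \ker \sigma_1$ and $J = \bigcap_{\sigma_2 \in \partial F} \ker \sigma_2$, and then feed the families $\mathcal{K} = \{\ker \sigma_1 \mid \sigma_1 \in \partial E\}$ and $\mathcal{L} = \{\ker \sigma_2 \mid \sigma_2 \in \partial F\}$ into Lemma \ref{lem:lazar}. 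This directly yields
\[ \ker q_I \otimes_{\min} q_J = \bigcap \{ \ker q_{\ker \sigma_1} \otimes_{\min} q_{\ker \sigma_2} \mid (\sigma_1, \sigma_2) \in \partial E \times \partial F \}. \]

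The step that needs a small argument is to rewrite $\ker (q_{\ker \sigma_1} \otimes_{\min} q_{\ker \sigma_2})$ as $\ker(\sigma_1 \otimes_{\min} \sigma_2)$. For each $\sigma_i$, the first isomorphism theorem gives an injective *-homomorphism $\phi_i$ with $\sigma_i = \phi_i \circ q_{\ker \sigma_i}$, so that $\sigma_1 \otimes_{\min} \sigma_2 = (\phi_1 \otimes_{\min} \phi_2) \circ (q_{\ker \sigma_1} \otimes_{\min} q_{\ker \sigma_2})$. The characteristic property of the minimal tensor product is precisely that it preserves injectivity of *-homomorphisms, so $\phi_1 \otimes_{\min} \phi_2$ is injective and the two kernels coincide. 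Substituting into the identity above gives the asserted equality.

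For the final assertion, Lemma \ref{lem:Hopenwasser} says that every pair $(\sigma_1, \sigma_2) \in \partial E \times \partial F$ gives rise to a boundary representation $\sigma_1 \otimes_{\min} \sigma_2 \in \partial(E \otimes_{\min} F)$. Applying Proposition \ref{prop:DavidsonKennedy} to $E \otimes_{\min} F \subseteq B(H \overline{\otimes} K)$, the \v Silov ideal of $E \otimes_{\min} F$ equals the intersection of the kernels of \emph{all} boundary representations, hence is contained in the intersection over the subfamily $\{\sigma_1 \otimes_{\min} \sigma_2 \mid (\sigma_1,\sigma_2) \in \partial E \times \partial F\}$. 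By the identity just proved, this intersection is $\ker q_I \otimes_{\min} q_J$, completing the argument.

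The only non-formal step is the passage from kernels of quotient maps to kernels of the boundary representations, which relies on the injectivity-preservation property of $\otimes_{\min}$; the rest is a direct invocation of the lemmas already in place.
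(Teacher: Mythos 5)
Your proposal is correct and follows essentially the same route as the paper: Proposition \ref{prop:DavidsonKennedy} plus Lemma \ref{lem:lazar} for the identity, the first isomorphism theorem together with injectivity of $\otimes_{\min}$ to replace $\ker(q_{\ker\sigma_1}\otimes_{\min}q_{\ker\sigma_2})$ by $\ker(\sigma_1\otimes_{\min}\sigma_2)$, and Lemma \ref{lem:Hopenwasser} with Proposition \ref{prop:DavidsonKennedy} for the final containment. The only cosmetic difference is that you factor $\sigma_i=\phi_i\circ q_{\ker\sigma_i}$ and invoke injectivity-preservation of $\otimes_{\min}$, whereas the paper writes the factorization in the other direction with $\phi_i$ a *-isomorphism; the content is identical.
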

\begin{proof}
Using Proposition \ref{prop:DavidsonKennedy} we see that
\[ I = \bigcap_{\sigma_1 \in \partial E} \ker \sigma_1, \,\,\, J = \bigcap_{\sigma_2 \in \partial F}  \ker \sigma_2.\]
According to Lemma \ref{lem:lazar}, we therefore have that
\[ \ker q_I \otimes_{\min} q_J = \bigcap \{ \ker q_{\ker \sigma_1} \otimes_{\min} q_{\ker \sigma_2} \mid (\sigma_1, \sigma_2) \in \partial E \times \partial F \}. \]
However, because for $i=1,2$ there exists a *-isomorphism $\phi_i$ such that $\phi_i \circ \sigma_i = q_{\ker \sigma_i}$, we have that
\[ \ker q_{\ker \sigma_1} \otimes_{\min} q_{\ker \sigma_2} = \ker \sigma_1 \otimes_{\min} \sigma_2 \]
for all $\sigma_1 \in \partial E$ and $\sigma_2 \in \partial F$. So we can indeed conclude that
\[ \ker q_I \otimes_{\min} q_J = \bigcap \{ \ker \sigma_1 \otimes_{\min} \sigma_2 \mid (\sigma_1, \sigma_2) \in \partial E \times \partial F \}. \]
Moreover, since $(\sigma_1,\sigma_2) \in \partial E \times \partial F$ implies that $\sigma_1 \otimes_{\min} \sigma_2 \in \partial(E \otimes_{\min} F) $ by Lemma \ref{lem:Hopenwasser}, we have that
\[  \bigcap \{ \ker \sigma_1 \otimes_{\min} \sigma_2 \mid (\sigma_1, \sigma_2) \in \partial E \times \partial F \} \supseteq \bigcap \{ \ker \sigma \mid \sigma \in \partial (E \otimes_{\min} F)\}. \]
So indeed, $\ker q_I \otimes_{\min} q_J$ contains the \v Silov boundary ideal.
\end{proof}

We now arrive at the proof of Theorem \ref{thm:Main}, where we prove that $\ker q_I \otimes_{\min} q_J$ satisfies the properties of the \v Silov ideal. Since we can use the \v Silov ideal to construct the $C^*$-envelope, this immediately leads to a result about $C^*$-envelopes.

\begin{proof}[Proof of Theorem \ref{thm:Main}]
By Corollary \ref{cor:contains_Silov_ideal} we know that $\ker q_I \otimes_{\min} q_J$ contains the \v Silov boundary ideal for $E \otimes_{\min} F$. But by Corollary \ref{cor:BoundaryIdeal} we know that it is a boundary ideal itself. Since the \v Silov boundary ideal is the maximal boundary ideal, we must have that $\ker q_I \otimes_{\min} q_J$ is the \v Silov boundary ideal for $E \otimes_{\min} F \subseteq B(H\overline{\otimes} K)$.

In particular this means that
\[ C^*_{env}(E \otimes_{\min} F) \cong C^*(E \otimes_{\min} F)/\ker q_I \otimes_{\min} q_J \] \[ = C^*(E) \otimes_{\min} C^*(F) / \ker q_I \otimes_{\min} q_J.  \]
Because $q_I: C^*(E) \longrightarrow C^*(E)/I \cong C^*_{env}(E)$ and $q_J: C^*(F) \longrightarrow C^*(F)/I \cong C^*_{env}(F)$, both being surjective *-homomorphisms, we have that
\[ q_I \otimes_{\min} q_J : C^*(E)\otimes_{\min} C^*(F) \longrightarrow  C^*_{env}(E) \otimes_{\min} C^*_{env}(F) \]
is a surjective *-homomorphism, so
\[ C^*(E)\otimes_{\min} C^*(F)/ \ker q_I \otimes_{\min} q_J \cong C^*_{env}(E) \otimes_{\min} C^*_{env}(F), \]
proving the theorem.
\end{proof}

\section{Application: the propagation number}
\label{sec:propagationNumber}

In \cite{ConnesVSuijlekom2020}, Connes and Van Suijlekom define a property of operator systems called the propagation number. The definition of this property is based on the $C^*$-envelope. Using Theorem \ref{thm:Main} we can fairly directly determine the behaviour of this property under the tensor product.

\begin{definition}
For $E \subseteq C^*(E)$ a concrete operator system, let
\[ E^{\circ n} := \overline{\textup{span}}\{ e_1 \cdot \ldots \cdot e_n \mid e_i \in E \textup{  for  } i=1, \ldots,n\} \subseteq C^*(E).\]
Let $(i_E, C^*_{env}(E))$ be the $C^*$-envelope of $E$. Then the \textbf{propagation number} is defined as
\[ \prop(E):= \inf \{ n \in \mathbb{N} \mid (i_E(E))^{\circ n} = C^*_{env}(E)\}. \]
\end{definition}

\begin{proposition}
Let $E \subseteq A$ and $F \subseteq B$ be concrete operator systems. Then
\[ E^{\circ n}\otimes_{\min} F^{\circ n} = (E\otimes_{\min} F)^{\circ n} \subseteq A \otimes_{\min} B. \]
\end{proposition}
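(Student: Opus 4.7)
The plan is to prove the two inclusions separately, with both directions essentially amounting to the identity $(e_1 \otimes f_1)(e_2 \otimes f_2) \cdots (e_n \otimes f_n) = (e_1 \cdots e_n) \otimes (f_1 \cdots f_n)$ together with continuity of multiplication in a $C^*$-algebra and density of the algebraic tensor product inside the minimal tensor product.

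For the inclusion $(E \otimes_{\min} F)^{\circ n} \subseteq E^{\circ n} \otimes_{\min} F^{\circ n}$, I would start with $n$ elements $x_1, \ldots, x_n \in E \otimes_{\min} F$ and approximate each one in norm by a finite sum $\sum_{j} e^{(i)}_{j} \otimes f^{(i)}_{j}$ with $e^{(i)}_{j} \in E$ and $f^{(i)}_{j} \in F$; this is possible by definition of $E \otimes_{\min} F$. Expanding the product by multilinearity turns $x_1 \cdots x_n$ (approximately) into a finite sum of terms of the form $(e^{(1)}_{j_1} \cdots e^{(n)}_{j_n}) \otimes (f^{(1)}_{j_1} \cdots f^{(n)}_{j_n})$, each of which lies in $E^{\circ n} \otimes F^{\circ n}$. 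Joint continuity of multiplication in $A \otimes_{\min} B$ (since it is a $C^*$-algebra, so $\|xy\| \leq \|x\|\|y\|$) lets me pass the approximation through the product, and because $E^{\circ n} \otimes_{\min} F^{\circ n}$ is norm-closed by construction, the limit $x_1 \cdots x_n$ lies in it. Taking closed span over all such $n$-fold products gives $(E\otimes_{\min} F)^{\circ n} \subseteq E^{\circ n} \otimes_{\min} F^{\circ n}$.

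For the reverse inclusion $E^{\circ n} \otimes_{\min} F^{\circ n} \subseteq (E \otimes_{\min} F)^{\circ n}$, I would first observe that a dense set of generators for the left side consists of elementary tensors $a \otimes b$ with $a \in E^{\circ n}$ and $b \in F^{\circ n}$; and in turn a dense set of generators for $a$ and $b$ is given by products $e_1 \cdots e_n$ and $f_1 \cdots f_n$ respectively. For such generators, the key algebraic identity
\[ (e_1 \cdots e_n) \otimes (f_1 \cdots f_n) = (e_1 \otimes f_1)(e_2 \otimes f_2) \cdots (e_n \otimes f_n) \]
exhibits the left-hand side as a product of $n$ elements of $E \otimes_{\min} F$, hence as an element of $(E \otimes_{\min} F)^{\circ n}$. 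Again by joint continuity of multiplication (applied twice: to pass from products of generators of $E^{\circ n}$ and $F^{\circ n}$ up to general products $a \in E^{\circ n}$, $b \in F^{\circ n}$, and then once more in the tensor-product direction), and closedness of $(E \otimes_{\min} F)^{\circ n}$, the inclusion extends to all of $E^{\circ n} \otimes_{\min} F^{\circ n}$.

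I do not expect any serious obstacle here: the argument is essentially a density/continuity reshuffling driven by the identity displayed above. The only point that requires mild care is bookkeeping the approximations so that the joint-continuity-of-multiplication step in a $C^*$-algebra is cleanly invoked on bounded sequences — but since $\|xy\| \le \|x\|\|y\|$ globally in a $C^*$-algebra, there is no boundedness issue to manage. Notably, unitality of $E$ and $F$ is not needed for either inclusion.
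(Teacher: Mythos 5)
Your proposal is correct and follows essentially the same route as the paper's proof: both directions rest on the identity $(e_1\otimes f_1)\cdots(e_n\otimes f_n)=(e_1\cdots e_n)\otimes(f_1\cdots f_n)$ (equivalently, multilinear expansion of products of sums of elementary tensors), combined with density of the algebraic tensor product, continuity of multiplication, and closedness of the two sides. Your explicit remark on bounded approximations and the observation that unitality is not used are fine but do not change the argument.
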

\begin{proof}
Because
\[ \left(\sum_i e_{1i}\otimes f_{1i} \right)\left( \sum_j e_{2j}\otimes f_{2j}\right) = \sum_{i,j} (e_{1i}e_{2j})\otimes (f_{1i}f_{2j})\]
we have that
\[ \{ x_1 x_2  \cdots  x_n \mid x_i \in E \otimes F\} \subseteq E^{\circ n}\otimes F^{\circ n} \subseteq E^{\circ n}\otimes_{\min} F^{\circ n}.\]
Each element in $(E \otimes_{\min} F)^{\circ n}$ is of the form $y_1y_2\cdots y_n$ for $y_i \in E \otimes_{\min} F$, while each $y_i$ is the limit of some sequence $(x_{ij})_{j=1}^\infty$ in $E \otimes F$. But by continuity of multiplication, this means that $x_{1j}x_{2j}\cdots x_{nj} \rightarrow y_1y_2\cdots y_n$ as $j \rightarrow \infty$ and so
\[ (E \otimes_{\min} F)^{\circ n} \subseteq \overline{\{ x_1 x_2  \cdots  x_n \mid x_i \in E \otimes F\}} \subseteq E^{\circ n} \otimes_{\min} F^{\circ n}.\]
Conversely, an element of the form $\sum_{i} (e_{i1}e_{i2}\ldots e_{in})\otimes(f_{i1}f_{i2}\ldots f_{in})$ can be seen to lie in the linear span of elements of the form $(e_{i1}\otimes f_{i1})(e_{2i}\otimes f_{2i})\ldots (e_{in}\otimes f_{in}) \in (E\otimes_{\min} F)^{\circ n}$. So $E^{\circ n} \otimes F^{\circ n} \subseteq (E\otimes_{\min} F)^{\circ n}$, and by the fact that $(E \otimes_{\min} F)^{\circ n}$ is closed, we have that $E^{\circ n} \otimes_{\min} F^{\circ n} \subseteq (E \otimes_{\min} F)^{\circ n}$.
\end{proof}

\begin{lemma}
\label{lem:separation}
Let $A$ and $B$ be $C^*$-algebras, and let $S \subsetneq A$ be a proper closed subspace. Then $S \otimes_{min} B$ is a proper subspace of $A \otimes_{\min} B$.
\end{lemma}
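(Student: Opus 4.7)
The plan is to exhibit an element of $A \otimes_{\min} B$ lying outside $S \otimes_{\min} B$ by producing a bounded linear map that separates them. Since $S \subsetneq A$ is proper, pick some $a \in A \setminus S$; assume $B \neq 0$ (otherwise the statement is vacuous) and pick a nonzero $b \in B$. The goal becomes to show that $a \otimes b$ does not lie in $S \otimes_{\min} B$.

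By Hahn--Banach there exists a bounded linear functional $\psi \in B^*$ with $\psi(b) \neq 0$. The right slice map
\[ R_\psi := \mathrm{id}_A \otimes \psi \colon A \otimes_{\min} B \longrightarrow A, \qquad a' \otimes b' \longmapsto \psi(b')\, a', \]
extends from the algebraic tensor product to a bounded linear map on $A \otimes_{\min} B$. This is a standard property of the minimal tensor product; for states $\psi$ it follows from the GNS construction, and one extends to arbitrary bounded $\psi$ by the Jordan decomposition of functionals on a $C^*$-algebra.

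With the slice map in hand, the rest is a short continuity argument: $R_\psi$ carries the algebraic subspace $S \otimes B$ into $S$, so by continuity it carries its norm-closure $S \otimes_{\min} B$ into the closure of $S$, which is $S$ itself. On the other hand $R_\psi(a \otimes b) = \psi(b)\, a$, which is not in $S$ because $\psi(b) \neq 0$ and $a \notin S$ while $S$ is a subspace. Hence $a \otimes b \in (A \otimes_{\min} B) \setminus (S \otimes_{\min} B)$, proving properness.

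The only real technical input is the existence and continuity of the slice map; once granted, the argument is routine. An alternative phrasing avoiding slice maps is to apply Hahn--Banach inside $A$ to obtain $\phi \in A^*$ with $\phi|_S = 0$ and $\phi(a) \neq 0$, and then invoke the standard fact that the product functional $\phi \otimes \psi$ extends boundedly to $A \otimes_{\min} B$; it vanishes on $S \otimes_{\min} B$ yet not at $a \otimes b$, giving the same conclusion.
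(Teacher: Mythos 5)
Your proof is correct and rests on the same idea as the paper's: separate $a \otimes b$ from $S \otimes_{\min} B$ by a bounded map on the minimal tensor product built from a Hahn--Banach functional annihilating $S$. The paper follows exactly the route of your final paragraph (the product functional $f_x \otimes_{\min} f_b$ with $f_x|_S = 0$ and $f_x(a)=1$), while your primary version via the slice map $\mathrm{id}_A \otimes \psi$ is an equivalent variant whose only extra input is the standard Tomiyama slice-map fact you cite.
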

\begin{proof}
Take $x \in A\setminus S$, and let $f_x \in A^*$ with $f_x(x) = 1$ and $f_x(S) = \{0\}$ (such an element exists by a standard separation result, e.g. \cite[Corollary IV.3.15]{ConwayACourse}). Take $b \in B$ nonzero, and $f_b \in B^*$ such that $f_b(b) = 1$. Then define the continuous functional 
$$g:= f_x \otimes_{\min} f_b: A \otimes_{\min} B \longrightarrow \mathbb{C}\otimes \mathbb{C} \cong \mathbb{C}. $$
Clearly, $g(x \otimes b) = 1$, and $g(S \otimes B) = \{0\}$. So by continuity we have $g(S \otimes_{\min} B) = \{0\}$, and we must therefore have that $x \otimes b \not\in S \otimes_{\min} B$.
\end{proof}

\begin{theorem}
\label{thm:propagationNumber}
Let $E$ and $F$ be operator systems. Then
\[ \prop(E \otimes_{\min} F) = \max\{\prop(E),\prop(F)\}. \]
\end{theorem}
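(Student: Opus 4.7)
The plan is to prove the equality by establishing both inequalities separately, using Theorem \ref{thm:Main} to identify $C^*_{env}(E \otimes_{\min} F)$ with $C^*_{env}(E) \otimes_{\min} C^*_{env}(F)$, and the preceding proposition to interchange the operation $(\cdot)^{\circ n}$ with the minimal tensor product. Before starting, the key preliminary step is to verify that under the isomorphism of Theorem \ref{thm:Main}, the canonical embedding $i_{E \otimes_{\min} F}$ corresponds to $i_E \otimes_{\min} i_F$; this follows because Lemma \ref{lem:tensor_of_isometric_maps_is_isometric} makes $i_E \otimes_{\min} i_F$ into a complete isometry whose image generates $C^*_{env}(E) \otimes_{\min} C^*_{env}(F)$ as a $C^*$-algebra, so uniqueness of the $C^*$-envelope gives the identification. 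I expect this is the only subtle point in the proof; once it is in place, the two inequalities follow rather directly from the machinery already built up.

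For the upper bound I would set $n = \max\{\prop(E), \prop(F)\}$ (assuming finite; otherwise the bound is vacuous). Since operator systems are unital, $i_E(E)^{\circ m} \subseteq i_E(E)^{\circ m+1}$ by padding with the unit, so $i_E(E)^{\circ n} = C^*_{env}(E)$ and similarly $i_F(F)^{\circ n} = C^*_{env}(F)$. The preceding proposition then yields
\[ (i_E(E) \otimes_{\min} i_F(F))^{\circ n} = i_E(E)^{\circ n} \otimes_{\min} i_F(F)^{\circ n} = C^*_{env}(E) \otimes_{\min} C^*_{env}(F), \]
whence $\prop(E \otimes_{\min} F) \leq n$.

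For the lower bound I would assume without loss of generality that $\prop(E) \geq \prop(F)$ and fix any $m < \prop(E)$. Then $i_E(E)^{\circ m}$ is a proper closed subspace of $C^*_{env}(E)$, so by Lemma \ref{lem:separation} the subspace $i_E(E)^{\circ m} \otimes_{\min} C^*_{env}(F)$ is properly contained in $C^*_{env}(E) \otimes_{\min} C^*_{env}(F)$. Since
\[ (i_E(E) \otimes_{\min} i_F(F))^{\circ m} = i_E(E)^{\circ m} \otimes_{\min} i_F(F)^{\circ m} \subseteq i_E(E)^{\circ m} \otimes_{\min} C^*_{env}(F), \]
it follows that $(i_E(E) \otimes_{\min} i_F(F))^{\circ m} \neq C^*_{env}(E \otimes_{\min} F)$, giving $\prop(E \otimes_{\min} F) > m$. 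As this holds for every $m < \prop(E)$, we conclude $\prop(E \otimes_{\min} F) \geq \prop(E) = \max\{\prop(E), \prop(F)\}$, and the same argument applied to arbitrary finite $m$ handles the case in which the maximum is infinite.
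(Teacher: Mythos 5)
Your proof is correct and follows essentially the same route as the paper: identify the $C^*$-envelope of $E \otimes_{\min} F$ with $C^*_{env}(E) \otimes_{\min} C^*_{env}(F)$ via Theorem \ref{thm:Main}, use the proposition $(E\otimes_{\min}F)^{\circ n} = E^{\circ n}\otimes_{\min}F^{\circ n}$ for the upper bound, and Lemma \ref{lem:separation} for the lower bound. If anything you are more careful than the paper --- spelling out the unit-padding monotonicity $E^{\circ m} \subseteq E^{\circ (m+1)}$, the containment $E^{\circ m}\otimes_{\min}F^{\circ m} \subseteq E^{\circ m}\otimes_{\min}C^*_{env}(F)$ needed to invoke Lemma \ref{lem:separation}, and the identification of $i_{E\otimes_{\min}F}$ with $i_E\otimes_{\min}i_F$ (though for that last point the clean justification is that the isomorphism constructed in the proof of Theorem \ref{thm:Main} is induced by $q_I\otimes_{\min}q_J$, whose restriction to $E\otimes_{\min}F$ is exactly $i_E\otimes_{\min}i_F$; merely being a $C^*$-extension whose algebra is abstractly isomorphic to the envelope does not by itself force it to be the envelope).
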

\begin{proof}
We identify $E \otimes_{\min} F$ with its image in its $C^*$-envelope, which by Theorem \ref{thm:Main} is $C^*_{env}(E) \otimes_{\min} C^*_{env}(F)$. Let $M = \max \{ \prop(E),\prop(F) \}$. Then
\[ (E \otimes_{\min} F)^{\circ M} = E^{\circ M} \otimes_{\min} F^{\circ M} = C^*_{env}(E) \otimes_{\min} C^*_{env}(F), \]
so $ \prop(E \otimes_{\min} F) \leq M$. Conversely, if we have that $m:=\prop(E\otimes_{\min} F) < M$, then either $E^{(m)} \subsetneq C^*_{env}(E)$ or $F^{(m)} \subsetneq C^*_{env}(F)$. So by Lemma \ref{lem:separation} this means that
\[ (E \otimes_{\min} F)^{\circ m} = E^{\circ m} \otimes_{\min} F^{\circ m} \neq C^*_{env}(E) \otimes_{\min} C^*_{env}(F), \]
but this is a contradiction because $m = \prop(E \otimes_{\min} F)$. So 
\[ M = \max\{ \prop(E),\prop(F) \} = \prop(E \otimes_{\min} F) \]
as claimed.
\end{proof}

\begin{remark}
Interestingly, while in this article we only work with unital operator systems, the equivalent result of Theorem \ref{thm:propagationNumber} for non-unital operator systems would directly prove invariance of the propagation number under stable equivalence. This was already proven by a different method in \cite{ConnesVSuijlekom2020}. Also interesting to mention in this regard are the recent results by Eleftherakis, Kakariadis and Todorov \cite{eleftherakisKakariadisTodorovMorita}, where they discuss Morita equivalence for operator systems, and show that two operator systems are equivalent in this way if and only if they are stably equivalent. So this implies that the propagation number is also an invariant under this type of Morita equivalence. 
\end{remark}

\subsection*{Acknowledgements}
I would like to sincerely thank Walter van Suijlekom, for the feedback in regards to this article, as well as for his support and guidance during the writing of the Master's thesis from which this article is adapted.

\bibliographystyle{ieeetr}

\bibliography{main.bib}

\begin{thebibliography}{10}

\bibitem{ConnesVSuijlekom2020}
A.~Connes and W.~D. van Suijlekom, ``{Spectral Truncations in Noncommutative
  Geometry and Operator Systems},'' {\em Commun. Math. Phys.}, vol.~383,
  pp.~2021--2067, May 2021.

\bibitem{PaulsenCompletelyBoundedMaps}
V.~Paulsen, {\em Completely Bounded Maps and Operator Algebras}.
\newblock Cambridge Studies in Advanced Mathematics, Cambridge University
  Press, 2003.

\bibitem{Hamana1979}
M.~Hamana, ``Injective envelopes of operator systems,'' {\em Publications of
  the Research Institute for Mathematical Sciences}, vol.~15, no.~3,
  pp.~773--785, 1979.

\bibitem{Arveson1969}
W.~B. Arveson, ``{Subalgebras of $C^*$-algebras},'' {\em Acta Mathematica},
  vol.~123, pp.~141 -- 224, 1969.

\bibitem{Arveson2008}
W.~{Arveson}, ``{The noncommutative Choquet boundary},'' {\em Journal of the
  American Mathematical Society}, vol.~21, pp.~1065--1084, Oct. 2008.

\bibitem{DavidsonKennedy2015}
K.~R. Davidson and M.~Kennedy, ``{The Choquet boundary of an operator
  system},'' {\em Duke Mathematical Journal}, vol.~164, no.~15, pp.~2989 --
  3004, 2015.

\bibitem{blecherMerdyOperatorAlgebras}
D.~P. Blecher, C.~Le~Merdy, {\em et~al.}, {\em Operator algebras and their
  modules: an operator space approach}.
\newblock No.~30 in Londen Mathematical Society monographs: new series,
  Clarendon Press, 2004.

\bibitem{EffrosRuanOperatorSpaces}
E.~Effros and Z.~Ruan, {\em Operator Spaces}.
\newblock London Mathematical Society monographs, Clarendon Press, 2000.

\bibitem{PisierIntroToOperatorSpaceTheory}
G.~Pisier, {\em Introduction to Operator Space Theory}.
\newblock London Mathematical Society Lecture Note Series, Cambridge University
  Press, 2003.

\bibitem{Hopenwasser1978}
A.~Hopenwasser, ``{Boundary Representations and Tensor Products of
  $C^\ast$-Algebras},'' {\em Proceedings of the American Mathematical Society},
  vol.~71, no.~1, pp.~95--98, 1978.

\bibitem{TakesakiTheory}
M.~Takesaki, {\em {Theory of Operator Algebras I}}.
\newblock Encyclopaedia of Mathematical Sciences, Springer Berlin Heidelberg,
  2001.

\bibitem{lazar2010}
A.~J. Lazar, ``The space of ideals in the minimal tensor product of
  c*-algebras,'' {\em Mathematical Proceedings of the Cambridge Philosophical
  Society}, vol.~148, no.~2, p.~243–252, 2010.

\bibitem{RordamStormer}
M.~{R\o rdam} and E.~{St\o rmer}, {\em {Classification of Nuclear
  $C^*$-Algebras. Entropy in Operator Algebras}}.
\newblock Encyclopaedia of Mathematical Sciences, Springer Berlin Heidelberg,
  2001.

\bibitem{ConwayACourse}
J.~Conway, {\em A Course in Functional Analysis}.
\newblock Graduate Texts in Mathematics, Springer New York, 1994.

\bibitem{eleftherakisKakariadisTodorovMorita}
G.~K. Eleftherakis, E.~T.~A. Kakariadis, and I.~G. Todorov, ``Morita
  equivalence for operator systems,'' 2021.

\end{thebibliography}

\end{document}